\newcommand{\co}{\mathbb{C}}
\newcommand{\cn}[1]{\mathbb{C}^{#1}}
\newcommand{\cpt}[1]{\mathbb{C}P^{2}}
\newcommand{\F}{\mathcal{F}}
\newcommand{\G}{\mathcal{G}}
\newtheorem{theorem}{Theorem}[section]
\newtheorem{proposition}[theorem]{Proposition}
\newtheorem{corollary}[theorem]{Corollary}
\newtheorem{lemma}[theorem]{Lemma}
\theoremstyle{definition}
\newtheorem{definition}[theorem]{Definition}
\theoremstyle{remark}
\newtheorem{remark}[theorem]{Remark}
\newcommand{\C}{\mathbb{C}}
\definecolor{verdeuva}{RGB}{122,154,1}
\begin{document}

\title[Nilpotent Holomorphic Foliations]{Nilpotent Holomorphic Foliations on \((\cn{n},{\bf 0})\) }
\date{\today}

\author{Evelia R. Garc\'{\i}a Barroso}
\address[Evelia R. Garc\'{\i}a Barroso]{Dpto. Matem\'{a}ticas, Estad\'{\i}stica e Investigaci\'on Operativa.
Instituto Universitario de Matemáticas y Aplicaciones (IMAULL). Universidad de La Laguna. Apartado de Correos 456. 38200 La Laguna, Te\-ne\-rife, Spain.}
\email{ergarcia@ull.es}

\author{Hern\'an Neciosup-Puican}
\address[Hernán Neciosup-Puican]{Dpto. Ciencias - Secci\'on Matem\'aticas, Pontificia Universidad Cat\'olica del Per\'u.}
\curraddr{Av. Universitaria 1801, San Miguel, Lima 32, Peru}
\email{hneciosup@pucp.edu.pe}

\subjclass[2010]{Primary 32V40 - 32S65}
\keywords{Nilpotent Holomorphic foliation, Cuspidal foliation, Weighted order, Newton polyhedron, Foliation of generalized surface type.}

\begin{abstract}
In this paper, we study nilpotent holomorphic foliations in complex dimension $n+1$, at the origin,
 defined by germs of integrable 1-forms whose linear part is given by \(zdz\).
 These foliations generalize the classical nilpotent foliations in dimension two. We show that every nilpotent foliation in higher dimensions can be described as the pullback of Takens' normal form, which naturally leads to the existence of cuspidal hypersurfaces as invariant sets.

\noindent We focus on the case where the separatrix is a quasi-ordinary cuspidal hypersurface, and we provide a characterization of those foliations that are of generalized hypersurface type. Furthermore, we recall the Newton polyhedron of a foliation and prove that, for foliations with a quasi-ordinary cuspidal separatrix, being of generalized hypersurface type is equivalent to the coincidence of the Newton polyhedra of the foliation and its separatrix. This provides a new criterion that extends previous results studied by Fernández-S\'anchez and J. Mozo (2006), and later partially generalized by the second-named author.
\end{abstract}

\maketitle

\section{Introduction}

In dimension $(\mathbb{C}^2,\mathbf{0})$, a foliation is called \textit{nilpotent} if it is generated by a germ of an integrable holomorphic 1-form whose nonzero linear part is of the form 
\(y\,dy\). That is, foliations defined by the 1-form:
\[y\,dy+\cdots\]
where the ellipsis represents higher-order terms in $dx$ and $dy$.\\

In 1974, F. Takens \cite[page 55]{takens1974singularities} proved that such foliations are given by the normal 1-form:
\begin{equation}\label{Takens}
  \omega=(y+a(x))dy+b(x)dx,  
\end{equation}
where \(a(x)=a_px^p+a_{p+1}x^{p+1}+\cdots,\; b(x)=b_{s-1}x^{s-1}+b_sx^s+\cdots\) are some formal power series. 

On the other hand, in 2002 E.~Str\'{o}zyna and H.~\.Zo\l{}\k{a}dek \cite[Lemma 3 in Section 3]{Strozina&Zoladek}, reduce the 1-form (\ref{Takens})  to
\[\omega=d\left(\frac{1}{2}y^2-x^s\right)+a(x)dy.\]
Then, formally, every foliation with a nilpotent singularity is generated by a 1-form of the type
\begin{equation}\label{formaStrozyna&Zoladek}
    \omega_{s,p}=d(y^2-x^s)+x^pu(x)dy,
\end{equation}
where $s\geq3,\;p\geq 2,\; u\in\mathbb{C}[|x|],\;$ with $u(0)\neq 0$. In the case $s<2p$ a  normal form equivalent to (\ref{formaStrozyna&Zoladek})  is
\begin{equation}\label{formacuspidal-Loray&Strozyna&Soladek}
    d(y^2-x^s)+c(x)(2xdy+sydx),
\end{equation}
(see \cite{cuspidalFrankLoray} and \cite[Lemma 3, Section 3]{Strozina&Zoladek}). The 1-form (\ref{formacuspidal-Loray&Strozyna&Soladek}) has the property that the cusp curve $y^2-x^s=0$ is invariant. In \cite{PFJM-Cuspideal-casi-Ordinario}, the authors refer to these foliations as cuspidal foliations.

E.~Str\'{o}zyna and H.~\.Zo\l{}\k{a}dek proved in \cite[Theorem 6 in Section 2]{Strozina&Zoladek}  that the forms (\ref{Takens}, \ref{formaStrozyna&Zoladek}, \ref{formacuspidal-Loray&Strozyna&Soladek}) can be chosen analytic and they call them {\it Takens prenormal forms}. The case $s=3$ was studied by R. Moussu \cite{Moussu-Holonomie} and a generalizations when $s\geq 3$ and $2p>s$ by Cerveau and Moussu \cite{cerveau1988groupes}. In both cases, the reductions of the singularities agree with the desingularization of the curve $y^2-x^s=0$.\\

Put $\mathcal{O}_n:=\mathbb C\{{\bf x}\}=\mathbb C\{x_1,x_2,\ldots,x_n\}$ for all $n\in \mathbb N$, $n>0$.\\

D. Cerveau and R. Moussu (\cite{cerveau1988groupes}, page 478, statement 1) proved, in the case $2p>s$, there exist analytic coordinates $(x,y)$ on $(\mathbb{C}^2,{\bf 0})$ in which (\ref{formaStrozyna&Zoladek}) can be written, up to multiplication by a unit, as:
\[\eta=d(y^2-x^s)+g(x,y)(sydx+2xdy),\]
where $g\in\mathcal{O}_2$ with $g(\mathbf{0})=0$. In this case, the foliation defined by $\eta$ admits the curve $y^2-x^s=0$ as its set of separatrices (\cite[Lemma 5.2]{Ovidius}).\\

\noindent
In this work, we extend the study of these foliations to the case $(\mathbb{C}^3,\mathbf{0})$, focusing on those of {\it cuspidal type} associated with holomorphic codimension-one foliations {\it of generalized surface type}. We show that such foliations can be effectively characterized by means of the notion of {\it weighted order} and the geometry of the {\it Newton polyhedron} of their defining 1-form. This approach provides a solid algebraic and combinatorial framework for the classification of a family of generalized surface-type foliations in dimension three. Furthermore, our results are obtained {\it without resorting to the resolution of singularities}, which yields an alternative, direct, and efficient method for the local study of complex foliations with cuspidal behavior.

\section{nilpotent holomorphic foliations on $(\mathbb{C}^{n+1}, {\bf 0})$}

J.P. Dufour and M. Zhitomirskii established in \cite{Dufour}  a formal normal form for singular integrable 1-forms whose linear part is non-zero. Later, F. Loray \cite{loray2006preparation} proved the convergence of this normal form. More precisely:

Let $\mathcal{F}$ be a singular holomorphic foliation of codimension one on $(\mathbb{C}^{n+1}, {\bf 0})$, defined by an integrable 1-form $\omega\in\Omega^1(\mathbb{C}^{n+1},{\bf 0})$
\begin{equation}\label{forma-zk}
\omega=z^kdz+B({\bf x},z)dz+\sum_{i=1}^nA_i({\bf x},z)dx_i,
\end{equation}
where $A_i, B\in \mathcal{O}_{n+1}$ are germs of holomorphic functions, with  $A_i({\bf 0},0)=B({\bf 0},0)=0$ and $k\in\mathbb{N}$.

If $B\equiv 0$, J.P. Dufour and M. Zhitomirskii (\cite[Theorem 3.7]{Dufour}) proved that, up to multiplication by a unit, $\omega$ is formally equivalent to
\begin{equation}\label{forma-formal-Dunfour}
\omega=z^kdz+\sum_{i=0}^kz^i\omega_i,
\end{equation}
where $\omega_i$ are 1-forms depending only on $x_1,x_2,\ldots,x_n$.

On the other hand, F. Loray proves that the 1-form (\ref{forma-zk}) is analytically equivalent to 
\begin{equation}\label{forma-Loray}
\omega=\sum_{i=1}^nP_i({\bf x},z)dx_i+Q({\bf x},z)dz,
\end{equation}
where $P_i, Q\in\mathcal{O}_{n}[z]$, with degree $\leq k$  and $Q$ monic.
If $\mathcal{F}$ es nilpotent, that is, $k=1$, then 
\[\begin{array}{rcl}
P_i({\bf x},z)  &=&a_i({\bf x})z+b_i({\bf x}),\\
Q({\bf x},z)    &=&z+g({\bf x}),
\end{array}\]
where $a_i, b_i, g\in\mathcal{O}_n$ with $a_i({\bf 0})=b_i({\bf 0})=g({\bf 0})=0$.
Performing the linear change in the coordinate $z$, $z\longmapsto z-g({\bf x})$, we obtain
\begin{eqnarray*}
\omega&=&\displaystyle\sum_{i=1}^{n}(a_{i}({\bf x})-g({\bf x})b_{i}({\bf x}))dx_i+z\sum_{i=1}^{n}b_{i}({\bf x})dx_{i}+zdz,\\
&=&\omega_0+z\omega_{1}+zdz 
\end{eqnarray*}

\noindent where \(\omega_{i}\in\Omega^1(\mathbb{C}^n,{\bf 0})\) are germs of 1-forms depending only on the variables \(x_1,x_2,\ldots,x_n\), with multiplicity \({\rm mult}_0(\omega_i)\geq 2.\) 
The integrability condition \(\omega\wedge d\omega=0\) implies that \(\omega_{i}=df_{i},\) where \(f_{i}\in\mathcal{O}_n\). Therefore, we can write: 
\[\omega=df_0+zdf_1+zdz,\]

\noindent where \({\rm mult}_0(f_i)>2\), and the integrability condition translates into \(df_0\wedge df_1=0\). Following \cite{hneciosup1}, a further change of variable \(z\longmapsto z-f_1({\bf x})\) allows us to rewrite the 1-form \(\omega\) as \[\omega=df_0-f_1dz+zdz.\] Moreover, the condition \(df_0\wedge df_1=0\) implies that the functions \(f_{i}\) are functionally dependent. R. Moussu (\cite[Chapter II]{Moussu1976}) proved that there exist a primitive function germ \(f\in\mathcal{O}_n\) and analytic functions \(h_{0}, h_{1}\in\co\{t\}\) such that \[f_{i}=h_{i}\circ f.\] Thus, considering the map \(\varphi: (\cn{n+1},{\bf 0}) \to (\cn{2},{\bf 0})\) defined by: \[\varphi({\bf x},z)=(t,z)=(f({\bf x}),z),\] we obtain \(\omega=\varphi^*\omega_{0}\), where 
\begin{eqnarray*}
   \omega_{0}&=&zdz+h'_{0}(t)dt-h_{1}(t)dz\\
&=&(z+a(t))dz+b(t)dt, 
\end{eqnarray*}

\noindent with 
${\rm mult}_0(a)=p\geq 3$, and ${\rm mult}_0(b)=s-1\geq2$. Note that the form $\omega_0$ is the Takens form (\ref{Takens}). Then, according to \cite{Strozina&Zoladek}, after an analytic change in the coordinate $t$, $\omega_0$  can be written as
\[\omega_{s,p}=d(z^2-t^s)+t^pu(t)dz.\]

As a result of the preceding discussion, we obtain

\begin{lemma}\label{pull-back-nilpotente} Every nilpotent type foliation \(\mathcal{F}\) on \((\cn{n+1},{\bf 0})\) is the pullback of Takens' formal normal form. 
\end{lemma}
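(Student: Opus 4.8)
The plan is to track a nilpotent germ through a sequence of explicit normalizing changes of coordinates until its defining 1-form is exhibited as a pullback, under a single map, of a two-variable form in Takens position. First I would invoke Loray's convergence theorem \cite{loray2006preparation} to replace the original integrable 1-form (\ref{forma-zk}) with $k=1$ by the analytically equivalent form (\ref{forma-Loray}), in which the $P_i$ and $Q$ are polynomials of degree $\le 1$ in $z$ with $Q$ monic. Reading off the coefficients gives $P_i=a_i({\bf x})z+b_i({\bf x})$ and $Q=z+g({\bf x})$ with $a_i,b_i,g\in\mathcal{O}_n$ vanishing at the origin, and the linear translation $z\mapsto z-g({\bf x})$ clears the constant term of $Q$, producing the decomposition $\omega=\omega_0+z\omega_1+zdz$ in which each $\omega_i$ is a germ of 1-form in $x_1,\dots,x_n$ alone with ${\rm mult}_0(\omega_i)\ge 2$.

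The key structural input comes from the integrability condition $\omega\wedge d\omega=0$. Expanding this identity in powers of $z$ and separating the homogeneous parts, I would show that each $\omega_i$ is closed; since $\omega_i$ depends only on the ${\bf x}$-variables, the Poincaré lemma yields primitives $f_i\in\mathcal{O}_n$ with $\omega_i=df_i$ and ${\rm mult}_0(f_i)>2$. The same identity, read at the appropriate order, collapses to the single relation $df_0\wedge df_1=0$. A second translation $z\mapsto z-f_1({\bf x})$, as in \cite{hneciosup1}, then rewrites the form as $\omega=df_0-f_1\,dz+zdz$, which already isolates the full dependence on ${\bf x}$ in the two functions $f_0,f_1$.

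The decisive step is the passage from functional dependence to a genuine factorization through one primitive. The relation $df_0\wedge df_1=0$ expresses that $f_0$ and $f_1$ are functionally dependent, and here I would apply Moussu's theorem \cite[Chapter II]{Moussu1976} to obtain a single primitive germ $f\in\mathcal{O}_n$ together with analytic functions $h_0,h_1\in\co\{t\}$ satisfying $f_i=h_i\circ f$. This is where I expect the main obstacle to lie: one must control the multiplicities and the analyticity of $f,h_0,h_1$ so that the resulting planar form inherits the Takens orders ${\rm mult}_0(a)=p\ge 3$ and ${\rm mult}_0(b)=s-1\ge 2$, and verifying that the higher-order vanishing of the $f_i$ forces precisely these bounds is the delicate point of the argument.

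To conclude, I would set $\varphi\colon(\cn{n+1},{\bf 0})\to(\cn{2},{\bf 0})$ by $\varphi({\bf x},z)=(t,z)=(f({\bf x}),z)$ and check directly that $\omega=\varphi^*\omega_0$, where $\omega_0=zdz+h_0'(t)dt-h_1(t)dz=(z+a(t))dz+b(t)dt$ is exactly the Takens normal form (\ref{Takens}) in the variables $(t,z)$. A final analytic change in $t$, following \cite{Strozina&Zoladek}, brings $\omega_0$ to $\omega_{s,p}=d(z^2-t^s)+t^pu(t)\,dz$, establishing that $\mathcal{F}$ is the pullback of Takens' formal normal form, as claimed.
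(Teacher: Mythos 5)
Your proposal follows essentially the same route as the paper: Loray's preparation theorem, the translation $z\mapsto z-g({\bf x})$, extraction of $\omega_i=df_i$ from integrability, the second translation $z\mapsto z-f_1({\bf x})$, Moussu's functional-dependence theorem to get $f_i=h_i\circ f$, and the pullback map $\varphi({\bf x},z)=(f({\bf x}),z)$ landing on Takens' form, with the final reduction to $\omega_{s,p}$ via Str\'o\.zyna--\.Zo\l{}\k{a}dek. The only difference is one of emphasis: you flag the multiplicity bookkeeping as the delicate point and spell out the closedness-plus-Poincar\'e-lemma argument that the paper leaves implicit, but the argument is the same.
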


From Lemma \ref{pull-back-nilpotente}, up to multiplication by a scalar, every nilpotent foliation on $(\mathbb{C}^{n+1},{\bf 0})$ is generated by
\[\omega=d(z^2-f^s({\bf x}))+f^{p}({\bf x})u(f)dz,\]
where $s\geq 3,\,p\geq 2$ and $u\in\mathcal{O}_n$ with $u({\bf 0})\neq0$.

In the case where $2p > s$, following \cite{cerveau1988groupes}, there exist analytic coordinates $(x, y)$ centered at the origin of $(\mathbb{C}^2, \mathbf{0})$ in which the form $\omega_{s,p}$ can be written, up to multiplication by a unit, as:
\[
\eta_{s,2} = d(y^2 - x^s) + g(x, y)(s y\, dx + 2x\, dy),
\]
where $g \in \mathcal{O}_2$ satisfies $g(\mathbf{0}) = 0$. In this setting, the foliation defined by $\eta_{s,p}$ admits the curve $y^2-x^s=0$ as its set of separatrices.

Before stating the next result, we recall the notion of a {\it cuspidal hypersurface}, which arises naturally in the study of nilpotent foliations with singular separatrix sets.

\begin{definition}\label{cuspidal_hypersurface}
Let \( S \subset (\mathbb{C}^{n+1}, \mathbf{0}) \) be a germ of an \( n \)-dimensional hypersurface with coordinates \( (x_1, \ldots, x_n, z) \). \( S \) is a \textit{cuspidal hypersurface} if it is defined by an analytic equation of the form
\[
z^2 + g(x_1, \ldots, x_n) = 0,
\]
with \( \mathrm{mult}_0(g) \geq 2 \).
\end{definition}
This class of hipersurfaces include, in particular, the so-called {\it quasi-ordinay cuspidal hypersurfaces}, defined by equations of the form
\[z^2+{\bf x}^P=0,\quad\text{where}\quad {\bf x}^P=x_1^{p_1}x_2^{p_2}\cdots x_n^{p_n}.\]

Cuspidal hypersurfaces naturally arise as a special case of the so-called {\it Zariski surfaces}, introduced by O. Zariski and named by Piotr Blass in 1980. 
These surfaces are defined by equations of the form
\[
z^k + g(x_1, x_2) = 0, \quad k \geq 2,
\]
where \( g \in \mathcal{O}_2 \) with \( \mathrm{mult}_0(g) \geq 2 \). They were extensively studied in the 1980s by Joseph Blass, Piotr Blass, and Jeff Lang \cite{Blas&Priot}, \cite{Blass-Priot-ZariskiSurface},  \cite{Blas-Priot&Levine}. For the special case \( k = 2 \), the surface becomes cuspidal according to Definition \ref{cuspidal_hypersurface}. Related works include those of A. Pichon \cite{Pichon&Anne}, and R. Mendris and A. Némethi \cite{Mendris&Nemethi}, dealing with irreducible plane curve singularities.

The following lemma is a direct consequence of \cite[Theorem 1.4]{hneciosup1}:

\begin{lemma}
Let \( \mathcal{F} \) be a nilpotent foliation on \((\mathbb{C}^{n+1},{\bf 0})\) admitting a cuspidal hypersurface \( S : z^2 + f^s(x) = 0 \) as its separatrix set. Then, there exist analytic coordinates in which \( \mathcal{F} \) is defined by the integrable 1-form
\begin{equation}\label{omega_s2}
\omega_{s,2} = d(z^2 - f^s({\bf x})) +  g(f({\bf x}), z) \left(s\,z\,df({\bf x}) + 2\, f({\bf x})dz\right),
\end{equation}
where \( g \in \mathcal{O}_2 \).
\end{lemma}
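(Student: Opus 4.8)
The plan is to reduce everything to the two-dimensional situation via the pullback structure supplied by Lemma \ref{pull-back-nilpotente} and then invoke the Cerveau–Moussu cuspidal normal form recalled above. Concretely, Lemma \ref{pull-back-nilpotente} furnishes a primitive $f\in\mathcal{O}_n$ and a map $\varphi({\bf x},z)=(f({\bf x}),z)$ such that $\mathcal{F}=\varphi^{*}\mathcal{F}_0$, where $\mathcal{F}_0$ is the germ of a plane foliation generated by a Takens/Str\'ozyna–Zo\l{}\k{a}dek form $\omega_0$ in coordinates $(t,z)$ on $(\mathbb{C}^2,\mathbf{0})$. The first step is to transfer the separatrix hypothesis downstairs: since $\varphi$ only modifies the first coordinate, the cuspidal hypersurface $S:z^2+f^s({\bf x})=0$ is exactly the preimage $\varphi^{-1}(C)$ of the plane cusp $C:z^2+t^s=0$, so by the functoriality of separatrix sets under pullback—this being the content of \cite[Theorem 1.4]{hneciosup1}—the curve $C$ is the separatrix of $\mathcal{F}_0$. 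In particular this places $\mathcal{F}_0$ in the regime $2p>s$ in which the Cerveau–Moussu normalization applies.

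Having located the separatrix of $\mathcal{F}_0$ at the cusp $C$, the second step is to apply the plane result recalled above: there exist analytic coordinates on $(\mathbb{C}^2,\mathbf{0})$ and a unit such that $\omega_0$ becomes, up to multiplication by that unit,
\[
\eta_{s,2}=d(z^2-t^s)+g(t,z)\bigl(s\,z\,dt+2\,t\,dz\bigr),
\]
with $g\in\mathcal{O}_2$ and $g(\mathbf{0})=0$. The remaining step is purely computational: pulling back by $\varphi$ and using $\varphi^{*}t=f({\bf x})$, $\varphi^{*}z=z$, $\varphi^{*}dt=df({\bf x})$ and $\varphi^{*}dz=dz$, one obtains
\[
\varphi^{*}\eta_{s,2}=d\bigl(z^2-f^s({\bf x})\bigr)+g(f({\bf x}),z)\bigl(s\,z\,df({\bf x})+2\,f({\bf x})\,dz\bigr),
\]
which is precisely the claimed form \eqref{omega_s2}. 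The harmless sign discrepancy between the separatrix $z^2+f^s=0$ and the primitive $z^2-f^s$ is absorbed by a linear rescaling of $t$ (equivalently of $f$), which I will fix as a convention at the outset.

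The main obstacle I anticipate is the compatibility of the planar coordinate change with the pullback structure. The Cerveau–Moussu normalization acts on both plane coordinates $(t,z)$, and since $t=f({\bf x})$, I must verify that composing this change with $\varphi$ still yields a legitimate system of analytic coordinates on $(\mathbb{C}^{n+1},\mathbf{0})$ and that the resulting map retains the product-type form needed for the pullback identity $\mathcal{F}=\varphi^{*}\mathcal{F}_0$. This is exactly where \cite[Theorem 1.4]{hneciosup1} does the heavy lifting, since it guarantees that the normalization downstairs lifts to analytic coordinates upstairs preserving the cuspidal separatrix, so that the identity holds with $\mathcal{F}_0$ already in cuspidal form. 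Once this lifting is in place, the remaining checks that $\omega_{s,2}$ is integrable and admits $S$ as its separatrix set are immediate from the functoriality of $d$ and $\wedge$ under pullback.
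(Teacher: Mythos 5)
Your strategy---reduce to the plane via Lemma \ref{pull-back-nilpotente}, apply Cerveau--Moussu downstairs, and pull back---is the one suggested by the paper's preceding discussion, but it does not close, and the step you use to close it is circular. The paper's entire proof of this lemma is the citation of \cite[Theorem 1.4]{hneciosup1}: that theorem is itself the prenormal form statement for foliations with prescribed cuspidal separatrix, i.e.\ essentially the lemma being proved. You invoke that same theorem twice, each time ascribing to it content it does not have: first as ``functoriality of separatrix sets under pullback,'' then as a guarantee that the planar normalization ``lifts to analytic coordinates upstairs.'' If the theorem is what the paper takes it to be, your argument assumes the conclusion and the two-dimensional reduction is superfluous scaffolding; if it were what you describe, you would owe proofs of both statements, and neither is trivial.

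The obstacle you flag at the end is indeed where the mathematics lives, and it is not repaired by your citation. The Cerveau--Moussu normalization is a diffeomorphism $\psi=(\psi_1,\psi_2)$ acting on \emph{both} plane variables, so $\psi\circ\varphi({\bf x},z)=\bigl(\psi_1(\tilde f({\bf x}),z),\psi_2(\tilde f({\bf x}),z)\bigr)$ is no longer of product type, and $(\psi\circ\varphi)^*\eta_{s,2}$ is not of the shape \eqref{omega_s2} in any evident coordinates: the whole point of \eqref{omega_s2} is that the coefficient $g$ depends on $({\bf x},z)$ only through $(f({\bf x}),z)$, and establishing this is precisely the content of \cite[Theorem 1.4]{hneciosup1}, which is proved there by normal-form techniques in dimension $n+1$, not by lifting a planar change of variables. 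Two further steps are also unjustified in your write-up: you silently identify the $f$ of the hypothesis with the primitive $\tilde f$ produced by Lemma \ref{pull-back-nilpotente}, although these are a priori different germs (matching them, including the exponent $s$, requires knowing that the separatrices of $\varphi^*\mathcal{F}_0$ are exactly the preimages of the separatrices of $\mathcal{F}_0$, which itself needs proof and is not a formal triviality); and you assert, without argument, that the separatrix hypothesis forces the downstairs foliation into the Cerveau--Moussu regime $2p>s$ in which \cite{cerveau1988groupes} applies.
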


We denote by \( \Sigma_{s,2} \) the set of foliations defined by a 1-form $\omega$ that is analytically equivalent to \( \omega_{s,2} \). Let \( \mathcal{F} \in \Sigma_{s,2} \). Since \( s \geq 3 \), the singular locus of \( \mathcal{F} \) is given by
\[
\mathrm{Sing}(\mathcal{F}) = \{ (\mathbf{x}, 0) \in \mathbb{C}^{n+1} : f(\mathbf{x}) = 0 \}.
\]

Expanding the function $g$ from the expression of (\ref{omega_s2}) in its Taylor series, we write:
\[g(u,v)=\sum_{i+j\geq 1}c_{ij}u^iv^j.\]
We define the {\it weighted order} of $g$, denoted by ${\rm ord}_{s,2}(g)$, as:
\begin{equation}\label{ordenpesado}
    {\rm ord}_{s,2}(g)=\min\{2i+sj: c_{ij}\neq 0\}.
\end{equation}

\section{Generalized hypersurface}
A germ of foliation at \((\mathbb{C}^2, 0)\) is said to be of generalized curve type if there are no saddle-nodes in its reduction of singularities. In this family, the separatrices — that is, formal invariant curves — are all analytic and capture significant topological information about the foliation. For instance, a foliation of generalized curve type and its set of separatrices share the {\it same reduction of singularities} ( see \cite[Theorem 2 and Theorem 6]{CCamachoLNetoPSad}), in the sense that a sequence of blow-ups that desingularizes all separatrices transforms the foliation into one with only simple singularities.

Another characterization arises through the Milnor number. Foliations of generalized curve type minimize the Milnor number, once a set of separatrices is fixed. For example, if the set of separatrices of a foliation \(\mathcal{G}\) is finite (the so-called non-dicritical case) and \(f(x,y)=0\) is a reduced equation of the separatrix set, then \(\mathcal{G}\) is of generalized curve type if and only if \(\mu(\mathcal{G}) = \mu(df)\), where \(\mu\) denotes the Milnor number at the origin of the foliation given by $df$, that is, the intersection multiplicity of $\frac{\partial f}{\partial x}=0$ and $\frac{\partial f}{\partial y}=0$ at the origin.

A natural extension of this notion leads to the study of generalized
hypersurface foliations in ambient dimension three or higher. In dimension three, the existence of a reduction of singularities for codimension one foliations, as established in \cite{cano1993reduction}, enabled the definition of generalized surface foliations in \cite{PF-JM-GeneralizedSurfaces}. These are foliations that are non-dicritical, that is, the exceptional divisor obtained through the reduction process is invariant, and whose reduction of singularities contains no saddle-nodes.

The main theorem of \cite{PF-JM-GeneralizedSurfaces} states that a generalized surface foliation and its set of separatrices share the same {\it reduction of singularities}, thus extending to the three-dimensional setting a characteristic property of generalized curve foliations in \((\mathbb{C}^2, 0)\). However, this correspondence fails in the dicritical case, since there exist foliations — such as the well-known example of Jouanolou \cite{Jouanolou} — that admit no separatrix. This contrast underscores the importance of restricting to the non-dicritical case when attempting to extend to higher dimensions the structural properties of generalized curve foliations.\\

Recall the following definition from \cite{PF-JM-HN-GeneralizedHypersurfaces}
\begin{definition}
Let \(\mathcal{F}\) be a codimension one holomorphic foliation defined by an integrable 1-form \(\omega\) on \((\mathbb{C}^{n+1}, 0)\). We say that \(\mathcal{F}\) is of generalized hypersurface type if the following conditions are satisfied:
\begin{enumerate}
    \item \(\mathcal{F}\) is non-dicritical.
    \item For every map \(\varphi : (\mathbb{C}^2, 0) \to (\mathbb{C}^{n+1}, 0)\), generically transversal to \(\mathcal{F}\) (i.e. $\varphi^*\omega\not\equiv 0$), the pullback foliation \(\varphi^*\mathcal{F}\) is of generalized curve type.
\end{enumerate}
\end{definition}

In \cite{PF-JM-HN-GeneralizedHypersurfaces}, the main result of \cite{PF-JM-GeneralizedSurfaces} is extended to arbitrary dimension. More precisely, the authors prove the following:

\begin{theorem}[{\cite[Theorem 1]{PF-JM-HN-GeneralizedHypersurfaces}}]
Let \(\mathcal{F}\) be a foliation of generalized hypersurface type on \((\mathbb{C}^{n+1}, 0)\), and let \(S := \{f = 0\}\) be a reduced equation defining its set of separatrices. Let \(\pi : (M, E) \to (\mathbb{C}^{n+1}, 0)\) be the morphism of an embedded desingularization of \(S\). Then the pullback foliation \(\pi^*\mathcal{F}\) has only simple singularities adapted to the exceptional divisor \(E\).
\end{theorem}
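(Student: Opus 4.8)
The plan is to reduce the higher-dimensional statement to the two-dimensional theory of generalized curves by cutting \(\pi^*\mathcal{F}\) with generic transversal \(2\)-planes, and then to transfer the conclusion upward using that \emph{being a simple singularity adapted to a normal crossings divisor is a property detectable on generic sections}. First I would fix a point \(q\in E\) and argue locally. Since \(\pi\) is an embedded desingularization of \(S\), the divisor \(\pi^{-1}(S)\cup E\) has normal crossings at \(q\); and because \(\mathcal{F}\) is non-dicritical, each center of the blow-ups composing \(\pi\) lies in the invariant set \(S\), so every irreducible component of \(E\) is invariant by \(\pi^*\mathcal{F}\). Thus \(E\) is an invariant normal crossings divisor and the notion of a simple singularity of \(\pi^*\mathcal{F}\) adapted to \(E\) in the sense of \cite{cano1993reduction} is meaningful at \(q\).

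Next I would establish a \emph{transversal criterion}: for a generic germ of holomorphic disk \(\tau\colon(\mathbb{C}^2,0)\to(M,q)\), transversal to the strata of \(E\) and satisfying \(\tau^*\pi^*\omega\not\equiv 0\), the singularity of \(\pi^*\mathcal{F}\) at \(q\) is simple adapted to \(E\) if and only if the planar foliation \(\tau^*\pi^*\mathcal{F}\) has a reduced (Seidenberg) singularity adapted to \(\tau^{-1}(E)\). This reduces essentially to linear algebra: the non-resonance of the eigenvalues and the absence of a saddle-node are conditions on the linear part of a local generator of \(\pi^*\mathcal{F}\), and a generic planar cut both preserves and reflects them.

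Then I would exploit the hypothesis. Choosing \(\varphi\colon(\mathbb{C}^2,0)\to(\mathbb{C}^{n+1},0)\) generically transversal to \(\mathcal{F}\), the foliation \(\varphi^*\mathcal{F}\) is of generalized curve type and its separatrix set is the plane curve \(C=\varphi^{-1}(S)\). By general position of \(\varphi\) with respect to the centers of \(\pi\) (equisingularity of generic plane sections), the lift \(\widetilde{\varphi}=\pi^{-1}\circ\varphi\) realizes \(\pi\), over a neighborhood of the image of \(\varphi\), as an embedded desingularization of \(C\); in particular \(\tau^*\pi^*\mathcal{F}\cong\widetilde{\varphi}^*\pi^*\mathcal{F}\) is the transform of the generalized curve \(\varphi^*\mathcal{F}\) under a desingularization of its own separatrix. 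By the two-dimensional theorem of Camacho–Lins Neto–Sad \cite{CCamachoLNetoPSad}, desingularizing the separatrix of a non-dicritical generalized curve already reduces it to simple singularities without saddle-nodes. Hence \(\tau^*\pi^*\mathcal{F}\) has only reduced singularities adapted to \(\tau^{-1}(E)\).

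Combining the transversal criterion of the second step with the conclusion of the third, \(\pi^*\mathcal{F}\) has only simple singularities adapted to \(E\) at each \(q\in E\), which proves the theorem. I expect the principal obstacle to be the compatibility between the embedded resolution of the hypersurface \(S\) and its generic plane sections: one must guarantee that the sequence of blow-ups resolving \(S\) restricts over a generic transversal plane to precisely a desingularization of the curve \(C=\varphi^{-1}(S)\), with every center meeting the plane transversally and generating no dicritical component. Securing this requires a careful stratification and genericity argument placing \(\varphi\) in general position with respect to the centers and the strata of \(E\), together with the delicate verification that a saddle-node upstairs necessarily leaves a saddle-node on a generic section, so that the absence of saddle-nodes can indeed be tested sectionwise.
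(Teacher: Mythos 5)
A preliminary remark: the paper you were given does not actually prove this statement; it quotes it as Theorem 1 of \cite{PF-JM-HN-GeneralizedHypersurfaces} (the Pro Mathematica paper of Fern\'andez-S\'anchez, Mozo-Fern\'andez and Neciosup), so there is no internal proof to compare against. Judged on its own terms, your proposal contains two gaps that I believe are fatal.

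The first gap is your ``transversal criterion'' (second step), which is false precisely at the points that matter most: corners of $E$ where three or more components meet, i.e.\ points of dimensional type $\geq 3$. If $q$ is such a point, then for any germ $\tau\colon(\mathbb{C}^2,0)\to(M,q)$ the preimage $\tau^{-1}(E)$ has at least three branches through the origin, so it is not a normal crossings divisor and ``reduced adapted to $\tau^{-1}(E)$'' is not even defined. Worse, simplicity upstairs cannot be read off from reducedness of a generic planar cut: the model simple corner $\omega=\lambda_1 x_2x_3\,dx_1+\lambda_2 x_1x_3\,dx_2+\lambda_3 x_1x_2\,dx_3$, with non-resonant $\lambda_i$, is simple adapted to $\{x_1x_2x_3=0\}$, yet its pullback by any generic $2$-plane section has vanishing linear part (multiplicity two, with three invariant branches through the origin), hence is \emph{not} a reduced planar singularity. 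So neither implication of your ``if and only if'' holds; and since your scheme is to prove reducedness on sections and then conclude simplicity upstairs, it would, if valid, ``prove'' that no simple corner of dimensional type $\geq 3$ can occur, which is absurd. Simplicity in ambient dimension $\geq 3$ is genuinely not a sectional notion at corners; the known proofs treat dimensional-type-two points via sections transverse to the singular locus and handle corners by a separate normal-form or inductive argument.

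The second gap is in your third step, and it is twofold. First, the claim that the embedded desingularization $\pi$ of the hypersurface $S$ restricts, over a generic $2$-plane $V$, to an embedded desingularization of the curve $C=S\cap V$ is a nontrivial equisingularity assertion (it requires every center to be transverse to the successive strict transforms of $V$, and the induced modification of $V$ to resolve $C$ with normal crossings); you assume it wholesale, and it is exactly the kind of statement that needs proof here. Second, and more structurally, even granting that assertion, the lift $\widetilde{\varphi}$ is the strict transform of $V$, which meets $E$ only along a curve. Your argument therefore controls $\pi^*\mathcal{F}$ only at points of that curve, not at an arbitrary $q\in E$. To reach a given $q$ you would have to choose a plane whose strict transform passes through $q$; such planes form a proper, non-generic subfamily, on which the genericity you invoked elsewhere (the generalized-curve property of $\varphi^*\mathcal{F}$, general position with respect to the centers) can fail, and when $q$ is a corner you run straight back into the obstruction of the previous paragraph. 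So the reduction to Camacho--Lins Neto--Sad, as you set it up, covers only a thin subset of $E$ and misses exactly the points where the conclusion is delicate.
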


This result provides substantial evidence that the fundamental relationship between a foliation and its separatrices — a characteristic property of the generalized curve case — persists in a more general class of non-dicritical foliations in higher dimensions.

In the context of generalized hypersurface foliations, it is natural to investigate to what extent specific geometric conditions on the separatrix constrain the structure of the foliation. In particular, when the separatrix is assumed to be a quasi-ordinary cuspidal hypersurface, it is possible to obtain additional information in terms of the weighted order, as shown in the following proposition:

\begin{proposition}\label{propo-casi-ordinara}
Let $\mathcal{F}\in \Sigma_{s,2}$ be a non-dicritical holomorphic foliation on $(\mathbb{C}^{n+1},{\bf 0})$ with separatrix $S$, a quasi-ordinary cuspidal hypersurface. If $\mathcal{F}$ is of generalized hypersurface type, then the weighted order of $g$ satisfies:
\[{\rm ord}_{s,2}(g)\geq s-2.\]
\end{proposition}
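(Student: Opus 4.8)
The plan is to reduce the statement to the two-dimensional cuspidal situation, where the characterization of generalized curve foliations in terms of the weighted order is already available (this is precisely the planar criterion of Fernández-Sánchez and Mozo that the present work extends). Since $\mathcal{F}$ is of generalized hypersurface type, the definition guarantees that $\varphi^{*}\mathcal{F}$ is of generalized curve type for \emph{every} generically transversal $\varphi\colon(\mathbb{C}^{2},\mathbf{0})\to(\mathbb{C}^{n+1},\mathbf{0})$. It therefore suffices to exhibit one convenient $\varphi$ whose pullback is again a planar cuspidal foliation, and to track how the weighted order transforms under it.

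Because $S$ is a quasi-ordinary cuspidal hypersurface $z^{2}+\mathbf{x}^{P}=0$ and the separatrix of a foliation in $\Sigma_{s,2}$ is $z^{2}-f^{s}(\mathbf{x})=0$, the relation $f^{s}=\pm\mathbf{x}^{P}$ forces $f$ to be a monomial in the unique factorization domain $\mathcal{O}_n$; that is, $f(\mathbf{x})=\mathbf{x}^{Q}$ with $P=sQ$. First I would take the transversal curve $\varphi(x,z)=(x,\dots,x,z)$, so that $f\circ\varphi=x^{m}$ with $m=\sum_i q_i={\rm mult}_0(f)\geq 1$ and $\varphi^{*}\omega_{s,2}\not\equiv 0$. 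Substituting into $\omega_{s,2}$ and using $d(x^{m})=m\,x^{m-1}dx$, the middle term factors cleanly:
\[
g(x^{m},z)\bigl(s z\,m x^{m-1}dx+2x^{m}dz\bigr)=x^{m-1}\,g(x^{m},z)\bigl((sm)\,z\,dx+2x\,dz\bigr).
\]
Hence $\varphi^{*}\omega_{s,2}=d(z^{2}-x^{s'})+G(x,z)\bigl(s'z\,dx+2x\,dz\bigr)$ with $s'=sm$ and $G(x,z)=x^{m-1}g(x^{m},z)$, so $\varphi^{*}\mathcal{F}$ lies again in the planar cuspidal class $\Sigma_{s',2}$.

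Next I would apply the planar criterion to $\varphi^{*}\mathcal{F}$: being of generalized curve type, its weighted order satisfies ${\rm ord}_{s',2}(G)\geq s'-2$. Writing $g=\sum_{i,j}c_{ij}u^{i}v^{j}$, the exponents of $G=\sum_{i,j}c_{ij}x^{mi+m-1}z^{j}$ give, with weights $(2,sm)$,
\[
{\rm ord}_{s',2}(G)=\min_{c_{ij}\neq 0}\bigl(2(mi+m-1)+smj\bigr)=m\,{\rm ord}_{s,2}(g)+2(m-1).
\]
Combining this with ${\rm ord}_{s',2}(G)\geq sm-2$ yields $m\,{\rm ord}_{s,2}(g)\geq m(s-2)$, i.e. ${\rm ord}_{s,2}(g)\geq s-2$, as claimed; notice that the threshold is independent of $m$. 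The main obstacle is the middle step: one must confirm that the pullback by a transversal curve lands \emph{exactly} in the planar cuspidal family $\Sigma_{s',2}$ — the factorization $x^{m-1}\bigl((sm)z\,dx+2x\,dz\bigr)$ is what makes this precise — so that the known two-dimensional weighted-order characterization can legitimately be invoked, and then check that the change of weights $(2,s)\mapsto(2,sm)$ carries the bound $s-2$ to itself.
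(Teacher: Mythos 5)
Your proof is correct and follows essentially the same route as the paper's: pull back along the diagonal section $\varphi(u,v)=(u,\ldots,u,v)$, observe that the pullback lands in the planar cuspidal family with $s'=s|P|$ and $G(u,v)=u^{|P|-1}g(u^{|P|},v)$, invoke the known two-dimensional weighted-order bound for generalized curve foliations (the paper cites Proposition 2.4.9 of Saravia's thesis for this), and compute ${\rm ord}_{s',2}(G)=|P|\,{\rm ord}_{s,2}(g)+2|P|-2$ to conclude. Your additional remarks---that quasi-ordinarity forces $f$ to be a monomial, and that the threshold $s-2$ is independent of the multiplicity $m=|P|$---are points the paper leaves implicit, but the argument is the same.
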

\begin{proof}
    We begin with the expression
    \[\omega_{s,2}=d(z^2-{\bf x}^{Ps})+{\bf x}^P\,z\,g({\bf x}^P, z)\left(2{dz\over z}+s\sum_{i=1}^np_i{dx_i\over x_i}\right).\]
    By hypothesis, $\mathcal{F}$ is of generalized hypersurface type. Then, for any generically transverse analytic section
    \[\varphi:(\mathbb{C}^2,{\bf 0})\to(\mathbb{C}^{n+1},{\bf 0}),\]
    the pullback foliation $\mathcal{G}=\varphi^*\mathcal{F}$ is of generalized curve type. Consider \[\varphi(u,v)=(c_0u,c_1u,c_2u,\cdots,c_{n-1}u,v),\quad\text{with}\quad c_i\in\mathbb{C}^*.\] Without loss of generality, we may assume $c_i=1$ for all $i$. 
    Then the foliation $\mathcal{G}$ is defined by:
    \[\eta=\varphi^*\omega_{s,2}=d(v^2-u^q)+\Delta(u,v)(2udv+qvdu),\]
    where $q=s|P|$ with $|P|=p_1+\cdots +p_n$, and \[\Delta(u,v)=u^{|P|-1}g(u^{|P|},v).\] Following \cite[Proposition 2.4.9]{Thesis-Saravia} we have
    \begin{equation}\label{ordenpesado2}
        {\rm ord}_{q,2}\Delta\geq {q-2}.
    \end{equation}

    On the other hand, writing the Taylor expansion of $g$, we obtain
    \[\Delta(u,v)=\sum_{i+j\geq 1}c_{ij}u^{|P|(i+1)-1}v^j.\]
  
    \noindent Thus, according to (\ref{ordenpesado}), we have
    \[
\begin{array}{rcl}
   {\rm ord}_{q,2}(\Delta) &=&\min\left\{ 2\left(|P|(i+1)-1\right)+q\cdot j:c_{ij}\neq0\right\}\\
                            &=&\min\left\{ 2(|P|(i+1)-1)+s|P|\cdot j:c_{ij}\neq0\right\}\\
                            &=&\min\left\{ |P|(2i + sj)+2|P|-2:c_{ij}\neq0\right\}\\
                            &=& |P| \cdot {\rm ord}_{s,2}(g) + 2|P| - 2. 
    \end{array}
    \]
    Consequently, by (\ref{ordenpesado2}),
   
   \[|P|\cdot{\rm ord}_{s,2}(g)+2|P|-2\geq q-2.\]
    Simplifyng, we obtain
\[{\rm ord}_{s,2}(g)\geq s-2. \]
\end{proof}

In the case $n=2$, that is, on $(\mathbb{C}^3,{\bf 0})$, after applying \cite[Theorem 5.2]{hneciosup1}, the condition given in Proposition \ref{propo-casi-ordinara} also becomes sufficient:

\begin{corollary}\label{cor:cond}
Let $\mathcal{F}\in \Sigma_{s,2}$ be a non-dicritical foliations on $(\mathbb{C}^3,{\bf 0})$ with quasi-ordinary cuspidal separatrix. Then, $\mathcal{F}$ is of generalized surface type if only if
\[{\rm ord}_{s,2}(g)\geq s-2.\]
\end{corollary}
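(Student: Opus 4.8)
\textbf{Proof plan for Corollary \ref{cor:cond}.}

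The plan is to combine Proposition \ref{propo-casi-ordinara} with the cited result \cite[Theorem 5.2]{hneciosup1} in order to upgrade the necessary condition into an equivalence, working specifically in the three-dimensional setting $(\mathbb{C}^3,\mathbf{0})$ where $n=2$. The forward implication is already established: if $\mathcal{F}\in\Sigma_{s,2}$ is of generalized surface type, then by Proposition \ref{propo-casi-ordinara} we immediately obtain ${\rm ord}_{s,2}(g)\geq s-2$, since generalized surface type in dimension three is exactly the notion of generalized hypersurface type. So the entire content of the corollary lies in the converse direction, and my first step would be to state clearly that it suffices to prove that the weighted-order inequality forces $\mathcal{F}$ to be of generalized surface type.

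For the converse, I would start from the normal form $\omega_{s,2}=d(z^2-\mathbf{x}^{Ps})+\mathbf{x}^{P}\,z\,g(\mathbf{x}^{P},z)\bigl(2\,dz/z+s\sum_i p_i\,dx_i/x_i\bigr)$ with the quasi-ordinary cuspidal separatrix, and then reduce the question of being of generalized surface type to a condition that can be checked on generic transverse sections. The key is that, by definition, $\mathcal{F}$ is of generalized surface type precisely when it is non-dicritical (which is assumed) and every generically transverse $\varphi:(\mathbb{C}^2,\mathbf{0})\to(\mathbb{C}^3,\mathbf{0})$ pulls back $\mathcal{F}$ to a generalized curve foliation. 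Taking the same linear sections $\varphi(u,v)=(u,u,v)$ as in the proposition, the pullback is the cuspidal plane foliation $\eta=d(v^2-u^q)+\Delta(u,v)(2u\,dv+q v\,du)$ with $q=s|P|$ and $\Delta(u,v)=u^{|P|-1}g(u^{|P|},v)$. The hypothesis ${\rm ord}_{s,2}(g)\geq s-2$, via the weighted-order computation already carried out in the proof of Proposition \ref{propo-casi-ordinara}, translates into ${\rm ord}_{q,2}(\Delta)\geq q-2$. At this point I would invoke \cite[Theorem 5.2]{hneciosup1}, which characterizes when a cuspidal plane foliation of this shape is a generalized curve in terms of exactly this weighted-order bound, to conclude that each such pullback $\mathcal{G}=\varphi^*\mathcal{F}$ is of generalized curve type.

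The main obstacle — and the step I would expect to require the most care — is showing that the generic-transversality condition is genuinely controlled by this single family of linear sections, rather than having to verify the generalized-curve property for \emph{every} transverse map $\varphi$. The honest worry is whether checking the weighted-order bound along the lines $\varphi(u,v)=(u,u,v)$ (or $(c_0u,c_1u,v)$) suffices to guarantee the behaviour along an arbitrary generically transverse section. I would address this by appealing to the quasi-ordinary structure of the separatrix and the invariance of the weighted order under the admissible coordinate changes, arguing that the weighted order of $g$ is a coordinate-free invariant of $\mathcal{F}$ that bounds ${\rm ord}_{q,2}$ of the pullback uniformly over generic $\varphi$; the genericity then ensures no degeneration lowers the order. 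Once this uniform control is in place, \cite[Theorem 5.2]{hneciosup1} applies to every generic transverse section simultaneously, yielding that $\varphi^*\mathcal{F}$ is generalized curve for all such $\varphi$, hence $\mathcal{F}$ is of generalized surface type. Combining this converse with the forward implication from Proposition \ref{propo-casi-ordinara} completes the equivalence.
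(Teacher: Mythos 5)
Your forward implication is fine and coincides with the paper's: it is exactly Proposition \ref{propo-casi-ordinara}, together with the (silent, also in the paper) identification of generalized surface type with generalized hypersurface type in dimension three. The genuine gap is in the converse. The pullback definition quantifies over \emph{every} generically transversal map $\varphi:(\mathbb{C}^2,\mathbf{0})\to(\mathbb{C}^3,\mathbf{0})$, while your plan only verifies the generalized-curve property along the special linear sections $\varphi(u,v)=(c_0u,c_1u,v)$. The step you yourself flag as the ``main obstacle'' is the entire difficulty, and the resolution you sketch --- that ${\rm ord}_{s,2}(g)$ is a coordinate-free invariant which ``bounds ${\rm ord}_{q,2}$ of the pullback uniformly over generic $\varphi$'', with genericity preventing degeneration --- is not an argument. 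For a nonlinear but still generically transversal $\varphi$, the pullback $\varphi^*\omega_{s,2}$ need not have the cuspidal shape $d(v^2-u^q)+\Delta(u,v)(2u\,dv+qv\,du)$ at all, so there is no weighted order ${\rm ord}_{q,2}(\Delta)$ to bound, and neither the paper nor your proposal supplies a lemma producing one.

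You have also mis-cited the key reference, and this is what sends your converse down the wrong road. According to the paper, \cite[Theorem 5.2]{hneciosup1} is not a planar statement characterizing when a cuspidal foliation on $(\mathbb{C}^2,\mathbf{0})$ is a generalized curve; it is a statement about foliations on $(\mathbb{C}^3,\mathbf{0})$ with quasi-ordinary cuspidal separatrix --- precisely the sufficiency half of the corollary --- proved by studying the reduction of singularities of the three-dimensional foliation. That is why the paper's proof of the corollary is essentially a citation: necessity is Proposition \ref{propo-casi-ordinara}, sufficiency is \cite[Theorem 5.2]{hneciosup1} applied verbatim, and the paper then remarks that this reduction-of-singularities argument is exactly what is unavailable in dimension higher than three. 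Note the sanity check: if Theorem 5.2 were the two-dimensional equivalence you describe, then your section-by-section scheme (were the ``all sections'' gap fillable) would prove the corollary in every dimension $n+1$, contradicting that remark. The repair is simple: for the converse, drop the pullback argument entirely and invoke \cite[Theorem 5.2]{hneciosup1} as the dimension-three result it is.
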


The condition on the order given in Corollary \ref{cor:cond} for
$n=2$, is a sufficient condition in \cite[Theorem 5.2]{hneciosup1}, which is obtained by studying the reduction of singularities of the foliation; however, this approach is not currently possible in higher dimensions.

\section{Newton Polyhedron}

The Newton polyhedron of any nonempty subset $S \subset \mathbb{R}_+^{n+1}$, denoted by $\mathcal{N}(S)$,  is the boundary of the convex hull of $S + \mathbb{R}_+^{n+1}$, where $+$ denotes de Minskowski sum.

In this section we will recall the notion of the Newton's polyhedron of foliations.

Let \( \omega \) be a holomorphic 1-form on \( (\mathbb{C}^{n+1}, \mathbf{0}) \). In coordinates \( ({\bf x},z):=(x_1, \ldots, x_n,z) \), it can be written as
\[
\omega = \sum_{i=1}^n a_i({\bf x},z) \, dx_i+a_{n+1}({\bf x},z)dz,
\]
or equivalently,
\begin{equation}\label{forma}
\omega = \sum_{i=1}^n x_i a_i(\mathbf{x},z) \frac{dx_i}{x_i}+za_{n+1}({\bf x},z){dz\over z},
\end{equation}
where \( a_i(\mathbf{x},z) \in \mathcal{O}_{n} \).

\noindent Expanding each \( a_i(\mathbf{x},z) \) into its Taylor series, we have
\[
a_i(\mathbf{x},z) = \sum_{|\alpha| \geq 0} a_\alpha^i (\mathbf{x},z)^\alpha, \quad a_\alpha^i \in \mathbb{C}.
\]
Thus, we can express
\[
\omega = \sum_{i=1}^n x_i \sum_{|\alpha| \geq 0} a_\alpha^i (\mathbf{x},z)^\alpha \frac{dx_i}{x_i}+za_{\alpha}^{n+1}({\bf x},z)^{\alpha}{dz\over z},
\]
and regrouping terms,
\[
\omega = \sum_{\alpha \in A} (\mathbf{x},z)^\alpha \omega_\alpha,
\]
where \( A \subset \mathbb{Z}^{n+1}_+ \subset \mathbb R_+^{n+1}\) and \( \omega_\alpha = \sum_{i=1}^n a_\alpha^i \frac{dx_i}{x_i}+a_{\alpha}^{n+1}{dz\over z} \).

\begin{definition}
Let \( \omega \) be a holomorphic 1-form on \( (\mathbb{C}^{n+1}, \mathbf{0}) \). The \textit{Newton polyhedron} of \( \omega \), denoted by \( \mathcal{N}(\omega) \), is \( \mathcal{N}(A) \).
\end{definition}

Alternatively, if a 1-form is written as in \eqref{forma}, we define the \textit{support} of \( \omega \) as
\[
\mathrm{supp}(\omega) = \left(\bigcup_{i=0}^n \mathrm{supp}(x_i a_i(\mathbf{x},z))\right)\cup\mathrm{supp}(za_{n+1}({\bf x},z)),
\]
where \( \mathrm{supp}(x_i a_i(\mathbf{x}),z)\) denotes the support of the monomials appearing in \( x_i a_i(\mathbf{x}) \).  
Accordingly, the Newton polyhedron of $\omega$ coincides with $\mathcal{N}(\mathrm{supp}(\omega))$.

\begin{remark}
If the holomorphic 1-form \( \omega \) is the differential of a holomorphic function $F$, i.e., \( \omega = dF \), then
\[
\mathcal{N}(\omega) = \mathcal{N}(F),
\]
where $\mathcal{N}(F)$ denotes the Newton polyhedron of the support of $F$.
\end{remark}

Observe that if $\varphi \in  \C\{x_1,\ldots,x_{n+1}\}$ is a unit and
$\omega' = \varphi \omega$,  then $\mathcal{N}(\omega') = \mathcal{N}({\omega})$ and, hence,
 the Newton polyhedron is an invariant of the foliation $\F$ defined by $\omega$ (respectively $\omega'$) and it will be denoted by $\mathcal{N}(\mathcal{F})$.

\begin{theorem}\label{thm-PolNewton}
Let \( \mathcal{F} \in \Sigma_{s,2} \) be a foliation on \( (\mathbb{C}^{n+1}, \mathbf{0}) \) with quasi-ordinary cuspidal separatrix. If \( \mathcal{F} \) is of generalized hypersurface type, then there is a change of coordinates under which the Newton polyhedra of \( \mathcal{F} \) and its separatrix \( S: (F({\bf x},z)=0) \) coincide, that is,
\[
\mathcal{N}(\mathcal{F}) = \mathcal{N}(F).
\]
\end{theorem}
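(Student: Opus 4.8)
The plan is to put $\mathcal{F}$ into the normal form $\omega_{s,2}$, read off its support directly, and compare it with the two--monomial support of the separatrix $F=z^2-{\bf x}^{Ps}$. First I would invoke the normalization lemma producing (\ref{omega_s2}) to pass to analytic coordinates in which $\mathcal{F}$ is defined by
\[\omega_{s,2}=d(z^2-{\bf x}^{Ps})+{\bf x}^P\,z\,g({\bf x}^P, z)\left(2\frac{dz}{z}+s\sum_{i=1}^np_i\frac{dx_i}{x_i}\right),\]
exactly as in the proof of Proposition \ref{propo-casi-ordinara}. Using $d(z^2-{\bf x}^{Ps})=2z\,dz-{\bf x}^{Ps}\sum_{i}sp_i\frac{dx_i}{x_i}$ together with the Taylor expansion $g(u,v)=\sum_{i+j\geq1}c_{ij}u^iv^j$, I would collect the coefficients of $\frac{dz}{z}$ and of each $\frac{dx_k}{x_k}$. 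This exhibits $\mathrm{supp}(\omega_{s,2})$ as the point $({\bf 0},2)$ (from $2z^2$), the point $(Ps,0)$ (from the terms $-sp_k{\bf x}^{Ps}$), and the family $(P(i+1),j+1)$ coming from ${\bf x}^Pz\,g({\bf x}^P,z)$, one point for each $(i,j)$ with $c_{ij}\neq0$. The coefficients $2$ and $-sp_k$ are nonzero, so the two corner points $({\bf 0},2)$ and $(Ps,0)$ genuinely belong to $\mathrm{supp}(\omega_{s,2})$.

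The separatrix $F=z^2-{\bf x}^{Ps}$ has support exactly $\{({\bf 0},2),(Ps,0)\}$, so $\mathcal{N}(F)$ is the boundary of $C:=\mathrm{conv}\!\left(\{({\bf 0},2),(Ps,0)\}+\mathbb{R}^{n+1}_+\right)$. The decisive step is to determine which of the extra support points $(P(i+1),j+1)$ already lie in $C$. Such a point lies in $C$ precisely when it dominates, coordinate by coordinate, some convex combination $\lambda({\bf 0},2)+(1-\lambda)(Ps,0)$ with $\lambda\in[0,1]$; the $x_k$--coordinates give $(i+1)\geq(1-\lambda)s$ and the $z$--coordinate gives $j+1\geq2\lambda$. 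Such a $\lambda$ exists if and only if $(s-i-1)/s\leq(j+1)/2$, which rearranges to $2i+sj\geq s-2$. Thus $(P(i+1),j+1)\in C$ exactly when the exponent pair $(i,j)$ contributes at least $s-2$ to the weighted order $\mathrm{ord}_{s,2}(g)=\min\{2i+sj:c_{ij}\neq0\}$, establishing a precise dictionary between the weighted order and the compact face of $\mathcal{N}(F)$.

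Finally, since $\mathcal{F}$ is of generalized hypersurface type, Proposition \ref{propo-casi-ordinara} gives $\mathrm{ord}_{s,2}(g)\geq s-2$, i.e. $2i+sj\geq s-2$ for every $(i,j)$ with $c_{ij}\neq0$. By the previous step every extra support point lies in $C$, whence $\mathrm{supp}(\omega_{s,2})\subseteq C$ and therefore $\mathrm{conv}(\mathrm{supp}(\omega_{s,2})+\mathbb{R}^{n+1}_+)\subseteq C$; conversely the two corner points belong to $\mathrm{supp}(\omega_{s,2})$, giving the reverse inclusion. The two convex sets coincide, and hence so do their boundaries, so $\mathcal{N}(\mathcal{F})=\mathcal{N}(F)$. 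I expect the only genuine work to be the convex--geometry computation of the second paragraph: pinning down the exact inequality $2i+sj\geq s-2$ that places the monomials of the $g$--term on or above the face of $\mathcal{N}(F)$ joining $({\bf 0},2)$ and $(Ps,0)$, and confirming that no support point can ever fall strictly below it. Once this translation between the weighted order and the Newton boundary is in place, the theorem follows at once from Proposition \ref{propo-casi-ordinara}.
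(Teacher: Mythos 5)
Your proof is correct, but it takes a genuinely different route from the paper's. The paper argues by reduction to dimension two: it chooses the explicit generically transverse section $\phi(u,v)=\bigl(p_1u/P^{P/|P|},\ldots,p_nu/P^{P/|P|},v\bigr)$, observes that the pullback $\phi^*\omega_{s,2}=d(v^2-u^{s|P|})+u^{|P|}v\,g(u^{|P|},v)\left(2\frac{dv}{v}+s|P|\frac{du}{u}\right)$ is of generalized curve type, invokes the two-dimensional Newton polygon characterization (Theorem 1.1 of the cited Ovidius paper) to get $\mathcal{N}(\mathcal{G})=\mathcal{N}(C)$ for $C:v^2-u^{s|P|}=0$, and then transfers this equality back to $(\mathbb{C}^{n+1},\mathbf{0})$ using the linearity of $\phi$ and the identifications $\phi(\mathrm{supp}(\eta))=\mathrm{supp}(\omega_{s,2})$ and $\phi(\mathrm{supp}(C))=\mathrm{supp}(F)$. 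You instead work entirely in $(\mathbb{C}^{n+1},\mathbf{0})$: you compute $\mathrm{supp}(\omega_{s,2})$ explicitly, show by elementary convex geometry that an extra support point $(P(i+1),j+1)$ lies in $\mathrm{conv}\bigl(\mathrm{supp}(F)+\mathbb{R}^{n+1}_+\bigr)$ exactly when $2i+sj\geq s-2$, and then quote Proposition \ref{propo-casi-ordinara}. Both arguments ultimately rest on a dimension-two input (yours through Proposition \ref{propo-casi-ordinara}, the paper's through the Ovidius theorem), but your version has two concrete advantages: it avoids the paper's transfer step, which is notationally delicate since $\phi$ acts on the ambient coordinates while supports live in exponent space, and the coordinate change produced by the two-dimensional theorem must itself be related to a coordinate change upstairs; and your dictionary between the weighted order and the Newton boundary is an equivalence, so it simultaneously yields the converse implication (that $\mathcal{N}(\mathcal{F})=\mathcal{N}(F)$ forces $\mathrm{ord}_{s,2}(g)\geq s-2$), which is precisely the support computation the paper carries out separately, and only for $n=2$, in the corollary following the theorem. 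In effect, your proof generalizes that corollary's computation to all $n$ and makes the theorem self-contained modulo Proposition \ref{propo-casi-ordinara}.
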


\begin{proof}
    Suppose that \( \mathcal{F}\), defined by the 1-form
\[\omega_{s,2}=d(z^2-{\bf x}^{Ps})+{\bf x}^P\,z\,g({\bf x}^P, z)\left(2{dz\over z}+s\sum_{i=1}^np_i{dx_i\over x_i}\right),\]
is of generalized hypersurface type. By definition, for any generically transverse section
\[
\phi: (\mathbb{C}^2, \mathbf{0}) \to (\mathbb{C}^{n+1}, \mathbf{0}),
\]
the pullback foliation \( \mathcal{G} := \phi^*\mathcal{F} \) is of generalized curve type. \\
Consider $P:=(p_1,p_2,\ldots,p_n)\in \mathbb N^n$ and the linear map
\[\phi(u,v)=\left({p_1u\over P^{P\over |P|}}, {p_2u\over P^{P\over |P|}}, \ldots, {p_nu\over P^{P\over |P|}},v\right),\]
where $|P|=p_1+p_2+\cdots+p_n$ and $P^{P\over |P|}=p_1^{p_1\over |P|}\cdot p_2^{p_2\over |P|}\cdots p_n^{p_n\over |P|}$.

The foliation $\G$ is defined by

\[\phi^*\omega_{s,2}=d(v^2-u^{s|P|})+u^{|P|}vg(u^{|P|},v)\left(2{dv\over v}+s|P|{du\over u}\right).\]

Then, by \cite[Theorem 1.1]{Ovidius}, there is a coordinate system such that 
\[
\mathcal{N}(\mathcal{G}) = \mathcal{N}(C),
\]
where \(C:v^2-u^{s|P|}=0\) is the separatrix of \(\mathcal{G}\).

Observe that
\[
\phi(\mathrm{supp}(\eta)) = \mathrm{supp}(\omega_{s,2}), \quad \text{and} \quad \phi(\mathrm{supp}(C)) = \mathrm{supp}(F).
\]
Moreover,
\[
\phi\left( \mathrm{conv}\left( \mathrm{supp}(\eta) + \mathbb{R}^{n+1}_+ \right) \right) = \phi\left( \mathrm{conv}\left( \mathrm{supp}(C) + \mathbb{R}^{n+1}_+ \right) \right),
\]

\noindent and since \( \phi \) is linear,
 
\[
\mathrm{conv}\left( \phi(\mathrm{supp}(\eta)) + \phi(\mathbb{R}^{n+1}_+) \right) = \mathrm{conv}\left( \phi(\mathrm{supp}(C)) + \phi(\mathbb{R}^{n+1}_+) \right),
\]
\noindent thus,
\[
\mathrm{conv}\left( \mathrm{supp}(\omega_{s,2}) + \phi(\mathbb{R}^{n+1}_+) \right) = \mathrm{conv}\left( \mathrm{supp}(F) + \phi(\mathbb{R}^{n+1}_+) \right).
\]
This implies that
\[
\mathcal{N}(\mathcal{F}) = \mathcal{N}(F).
\]
\end{proof}

In the case $n=2$, that is, on $(\mathbb{C}^3,{\bf 0})$, after applying Corollary \ref{cor:cond}, the condition given in Theorem \ref{thm-PolNewton} also becomes sufficient:

\begin{corollary}
Let \( \mathcal{F} \in \Sigma_{s,2} \) be a foliation on \( (\mathbb{C}^{3}, \mathbf{0}) \) with quasi-ordinary cuspidal separatrix. Then \( \mathcal{F} \) is of generalized surface type if and only if there is a change of coordinates under which the Newton polyhedra of \( \mathcal{F} \) and its separatrix \( S: (F({\bf x},z)=0) \) coincide, that is,
\begin{equation}\label{Newton}
\mathcal{N}(\mathcal{F}) = \mathcal{N}(F).    
\end{equation}
\end{corollary}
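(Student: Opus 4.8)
The plan is to route both implications through the weighted-order criterion of Corollary~\ref{cor:cond}, so that the statement reduces to a purely combinatorial comparison of two Newton polyhedra. Since in $(\mathbb{C}^3,\mathbf{0})$ the notions of generalized hypersurface type and generalized surface type coincide, the implication ``generalized surface type $\Rightarrow \mathcal{N}(\mathcal{F})=\mathcal{N}(F)$'' is exactly Theorem~\ref{thm-PolNewton} specialized to $n=2$, and requires no further argument. All the work therefore lies in the converse.

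For the converse I would work in the normal-form coordinates in which $\mathcal{F}$ is given by $\omega_{s,2}$ as in \eqref{omega_s2}, with separatrix $F=z^2-\mathbf{x}^{Ps}=0$ and $P=(p_1,p_2)$. Expanding $g(\mathbf{x}^P,z)=\sum_{i+j\geq 1}c_{ij}\,\mathbf{x}^{Pi}z^{j}$ and reading off the coefficients of $dz,dx_1,dx_2$, one checks that $\mathrm{supp}(\omega_{s,2})$ consists of the two points $(0,0,2)$ and $(p_1 s,p_2 s,0)$ coming from $d(z^2-\mathbf{x}^{Ps})$ --- which are precisely the generators of $\mathcal{N}(F)$ --- together with one point $Q_{ij}=(p_1(i+1),p_2(i+1),j+1)$ for each $c_{ij}\neq 0$, coming from the $g$-part. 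Consequently $\mathcal{N}(\mathcal{F})=\mathcal{N}(F)$ if and only if every $Q_{ij}$ lies in $\mathrm{conv}\{(0,0,2),(p_1 s,p_2 s,0)\}+\mathbb{R}_+^{3}$.

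To translate this containment into a weighted-order inequality I would use the weight functional $\ell(a,b,c)=\tfrac{2}{|P|}(a+b)+s\,c$, where $|P|=p_1+p_2$. It takes the value $2s$ on both generators of $\mathcal{N}(F)$, and on the extra points $\ell(Q_{ij})=2(i+1)+s(j+1)=(2i+sj)+s+2$. Hence $\ell(Q_{ij})\geq 2s$ precisely when $2i+sj\geq s-2$; minimizing over all $c_{ij}\neq 0$ and recalling \eqref{ordenpesado}, this is exactly $\mathrm{ord}_{s,2}(g)\geq s-2$. Thus $\mathcal{N}(\mathcal{F})=\mathcal{N}(F)$ forces $\mathrm{ord}_{s,2}(g)\geq s-2$, and Corollary~\ref{cor:cond} then yields that $\mathcal{F}$ is of generalized surface type, completing the equivalence.

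The delicate point --- and the step I would write out most carefully --- is the equivalence between ``$\ell(Q_{ij})\geq 2s$ for all $(i,j)$'' and genuine membership of every $Q_{ij}$ in the Newton polyhedron region, since $\mathcal{N}(F)\subset\mathbb{R}^3$ is bounded by several faces and a single supporting functional need not detect all of them. What rescues the argument is the special shape of the $Q_{ij}$: their first two coordinates are always the proportional pair $(p_1(i+1),p_2(i+1))$. A short case split then suffices: points with $j\geq 1$ automatically lie in the region, while for $j=0$ membership is equivalent to $2i\geq s-2$; in both cases this matches the single inequality $2i+sj\geq s-2$, so $\ell$ alone characterizes membership and the weighted-order condition is both necessary and sufficient for the two polyhedra to coincide.
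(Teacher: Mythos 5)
Your proof is correct and follows essentially the same route as the paper's: necessity via Theorem~\ref{thm-PolNewton}, and sufficiency by computing $\mathrm{supp}(\omega_{s,2})$ and $\mathrm{supp}(F)$ in the normal-form coordinates, translating the equality $\mathcal{N}(\mathcal{F})=\mathcal{N}(F)$ into the weighted-order inequality $\mathrm{ord}_{s,2}(g)\geq s-2$, and then invoking Corollary~\ref{cor:cond}. If anything, your explicit case split (points with $j\geq 1$ lie in the region automatically, while $j=0$ requires $2i\geq s-2$) is more careful than the paper's corresponding step, since the paper's displayed conditions \eqref{condiciones} are, as literally written, a tautology --- the third disjunct omits the constraint $2(i+1)\geq s$ --- so your justification that the single supporting functional detects membership actually repairs that slip.
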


\begin{proof}
Let us first recall that, by Theorem \ref{thm-PolNewton}, condition \eqref{Newton} is necessary in any dimension. Therefore, to prove the corollary, it suffices to show that in dimension $3$ condition \eqref{Newton} is also sufficient.

The foliation \( \mathcal{F} \in\Sigma_{s,2}\) is  defined by
\[
\omega_{s,2} = d(z^2 - x^{ps}y^{qs}) + g(x^{p}y^{q}, z) x^{p}y^{q}z\left( 2 \frac{dz}{z} + sp \frac{dx}{x} + sq \frac{dy}{y} \right),
\]
and, upon expanding the differential, we can write
\[
\omega_{s,2} = a(x,y,z) \left( p \frac{dx}{x} + q \frac{dy}{y} \right) + 2b(x,y,z) \frac{dz}{z},
\]
where
\[
\begin{array}{rcl}
a &=& s(x^{p}y^{q})^{s} - s g(x^{p}y^{q},z) x^{p}y^{q}z, \\
b &=& z^2 + g(x^{p}y^{q},z) x^{p}y^{q}z.
\end{array}
\]
Expanding \( g \) into its Taylor series,
\[
g(x^{p}y^{q},z) = \sum_{i+j>0} c_{ij} x^{ip} y^{iq} z^j,
\]
we obtain
\[
\mathrm{supp}(\omega_{s,2}) = \mathrm{supp}(a,b) = \left\{ (0,0,2), (sp,sq,0), ((i+1)p, (i+1)q, j+1) : c_{ij} \neq 0 \right\}.
\]

On the other hand, the support of the separatrix, defined by
\[
F(x,y,z) = z^2 + (x^p y^q)^s,
\]
is given by
\[
\mathrm{supp}(F) = \left\{ (0,0,2), (sp,sq,0) \right\}.
\]

Since by hypothesis \( \mathcal{N}(\mathcal{F}) = \mathcal{N}(F) \), it follows that
\begin{equation}\label{condiciones}
i+1 \geq s, \quad \text{or} \quad i+1 < s \; \text{and} \; j \geq 1, \quad \text{or} \quad i+1 < s \ \text{and} \ j = 0.
\end{equation}
Moreover, \( \mathcal{N}(\mathcal{F}) \) coincides with the Newton polyhedron  of the set $\{(0,0,2), (sp, sq,0)\}$.

After Corollary \ref{cor:cond} suppose, by contradiction, that \( \mathrm{ord}_{s,2}(g) < s-2 \). Then there exist \( i_0, j_0 \in \mathbb{Z}_{\geq 0} \) such that
\[
2i_0 + sj_0 < s-2,
\]
which implies that
\[
2(i_0 + 1) + sj_0 < s,
\]
and therefore,
\[
i_0 + 1 < \frac{s}{2}(1-j_0),
\]
contradicting the conditions in \eqref{condiciones}.

\end{proof}
\bigskip

\noindent {\bf Funding:} The first-named  author was partially supported by the  Spanish grant  PID2023-149508NB-I00 funded by 
MICIU/AEI
/10.13039/501100011033 and by
FEDER, UE. The second-named author was partially supported by the PUCP Postdoctoral Fellowships 2024.\\

\noindent {\bf Acknowledgements:}
We are grateful to Arturo Fernández-Pérez for his remarks.

\end{document}